\newtheorem{theorem}{Theorem}
\theoremstyle{plain}
\newtheorem{assumption}{Assumption}
\theoremstyle{plain}
\newtheorem{remark}[theorem]{Remark}
\theoremstyle{plain}
\newtheorem{definition}[theorem]{Definition}
\theoremstyle{plain}
\newtheorem{corollary}[theorem]{Corollary}
\theoremstyle{plain}
\newcommand{\dx}{\,dx}
\newcommand{\hp}{s}
\author{Christian Kahle}
\date{\today}
\title{A $L^\infty$ bound for the Cahn--Hilliard equation with relaxed
non-smooth free energy}
\begin{document}

\maketitle

\begin{abstract}
Phase field models are widely used to describe multiphase systems. Here a smooth
indicator function, called phase field, is used to describe the spatial
distribution of the phases under investigation. Material properties like
density or viscosity are introduced as given functions of the phase field. 
These parameters typically have physical bounds to fulfil, e.g. positivity of
the density. To guarantee these properties, uniform bounds on the phase field
are of interest. 

In this work we derive a uniform bound on the solution of the
Cahn--Hilliard system, where we use the double-obstacle free energy, that is
relaxed by Moreau--Yosida relaxation.
\end{abstract}

\section{Introduction}

Phase field models are a common approach to describe fluid systems of two or
more components and to deal with the complex topology changes that might appear
in such systems.
One of the basic models is the Cahn--Hilliard system
\cite{CahnHilliard} that models spinodal decomposition of a binary metal alloy
with components having essentially the same density. Particles are only
transported by diffusion.
Based on this model several extensions to transport by convection (model 'H'
\cite{HohenbergHalperin1977}) and additionally to fluids with different
densities are proposed throughout the literature, see
\cite{Lowengrub_CahnHilliard_and_Topology_transitions,
Boyer_two_phase_different_densities,
Ding_Spelt_Shu_diffuse_interface_model_diff_density,
AbelsGarckeGruen_CHNSmodell}.
In those models typically the density and viscosity of the two fluid components
are introduced as given functions of a phase field that is introduced to
describe their spatial distribution and that is the solution of a Cahn--Hilliard type
equation.
In general for the Cahn--Hilliard equation with free energies, which allow
non-physical values of the phase field, no $L^\infty(\Omega)$ bounds are
available.
As a consequence, in general situations one can not guarantee that the density
and the viscosity of the fluids stay positive, i.e. one might run into
non-physical data.

In this work we summarize, combine and extend results on the analytical
treatment of the Cahn--Hilliard equation with double-obstacle free energy, which
is relaxed using Moreau--Yosida relaxation.
The aim of this work is, to help later work on Cahn--Hilliard type models by
providing $L^\infty(\Omega)$ bounds on the violation of the physical meaningful
values of the phase field.

Using Moreau--Yosida relaxation for the treatment of the Cahn--Hilliard equation
with double-obstacle free energy is first analytically investigated in
\cite{HintermuellerHinzeTber}. Therein especially the convergence  of the
solutions of the relaxed system to the solution of the double-obstacle system is
shown. One main ingredient is the interpretation of the Cahn--Hilliard equation
as the first order optimality conditions of a suitable optimal control problem with
box constraints in $H^1(\Omega)$ on the control. 

On the other hand, in
\cite{HintermuellerSchielaWollner_length_PD_path_MY_path_following},
a typical optimal control problem with box constraints in $C(\bar{\Omega})$ on
the state is investigated. Here the constraints are treated using Moreau--Yosida
relaxation. The authors provide decay rates for the $L^{\infty}(\Omega)$ norm of
the violation of the box constraints in terms of the relaxation parameter. The
proof relies on higher regularity of the state, i.e. H\"older regularity is used.

The results in
\cite{HintermuellerSchielaWollner_length_PD_path_MY_path_following}
apply for the case of Dirichlet boundary data on the state, while in 
\cite{Hintermueller_Keil_Wegner__OPT_CHNS_density} these results are used for
the Cahn--Hilliard equation with Neumann boundary data,
to prove convergence of solutions for a relaxed  equation to the solutions of
a Cahn--Hilliard system with double-obstacle free energy. However, in
\cite{Hintermueller_Keil_Wegner__OPT_CHNS_density} no convergence rate is
provided.

Here we combine the aforementioned results to obtain a $L^\infty(\Omega)$ bound
on the violation of the physically meaningful values of the solution of the
Cahn--Hilliard equation that can later be used 
in more sophisticated models where bounds on parameters, like the density, that depend
on the solution of the Cahn--Hilliard equation, are required.

The paper is organized as follows.
In Section \ref{sec:CH} we introduce the time discrete Cahn--Hilliard system
with double-obstacle free energy and its relaxation using Moreau--Yosida
relaxation. We further summarize results from
\cite{HintermuellerHinzeTber}.
In Section \ref{sec:Linfty1} we apply proofs from
\cite{HintermuellerSchielaWollner_length_PD_path_MY_path_following} to obtain a
$L^1(\Omega)$ bound and based on this a first $L^\infty(\Omega)$ bound on the
constraint violation. Using a structural assumption, in Section
\ref{sec:Linfty2} we improve this bound.
Finally a numerical validation is carried out in Section \ref{sec:numerics}.

\section{The Cahn--Hilliard system with double-obstacle free energy}
\label{sec:CH}
In the following, let $\Omega \subset \mathbb R^n$, $n\in \{2,3\}$ denote an
 open bounded domain, that fulfils the cone condition, see
\cite{Adams_SobolevSpaces}. Its outer normal we denote by $\nu_\Omega$.

We further use usual notation for Sobolev spaces defined on $\Omega$, see
\cite{Adams_SobolevSpaces}, i.e. $W^{k,p}(\Omega)$ denotes the space of
functions that admit weak derivatives up to order $k$ that are
Lebesgue-integrable to the power $p$. For $p=2$ we write
$H^k(\Omega):=W^{k,2}(\Omega)$ and for $k=0$ we write $L^p(\Omega) := W^{0,p}(\Omega)$.
For $v\in W^{k,p}(\Omega)$ its norm is denoted by $\|v\|_{W^{k,p}(\Omega)}$.

Moreover we introduce
\begin{align*}
\mathcal{K} = 
\{ v \in H^1(\Omega)\,|\, |v|\leq 1 a.e.\},\\
V_0 = \{ v \in H^1(\Omega)\,|\, (v,1) = 0 \}.
\end{align*}

For a fixed time $t$ we consider an alloy consisting of two components $A$ and
$B$ and we are interested in the evolution of this alloy over time.
 For the description of the distribution of the two components we introduce a
phase field $\varphi$ that serves as a binary indicator
function in the sense, that $\varphi(x) = 1$ indicates pure fluid of component
$A$ at point $x\in \Omega$, while $\varphi(x) = -1$ indicates pure fluid of
component $B$.
We further assume, that the transition zone $\Gamma_\epsilon$ between $A$ and
$B$ is of positive thickness, proportional to $\epsilon$, and that both
components are mixed therein. 
The phase field admits values from the interval $(-1,1)$ in this region.

We introduce  the Ginzburg--Landau energy  of the system by
\begin{align*}
  GL(\varphi) = \int_\Omega\frac{\epsilon}{2}|\nabla \varphi|^2 +
  \frac{1}{\epsilon} W(\varphi)\dx.
\end{align*}
Here $W(\varphi)$ is the so called free energy density and is of double well
type, i.e.
it admits exactly two minima at $\pm1$ and is positive everywhere else.
We denote the first variation of $GL$ by
\begin{align*}
  \mu := -\epsilon\Delta \varphi + \epsilon^{-1}W^\prime(\varphi) 
\end{align*}
and introduce a mass conserving gradient flow with a mobility or diffusivity
$b(\varphi)$ depending of the fluid component as
\begin{align}
  \partial_t \varphi &= \mbox{div}(b(\varphi) \nabla \mu),
  \label{eq:CH0_1}\\
  -\epsilon\Delta \varphi + \epsilon^{-1}W^\prime(\varphi) &=
  \mu.\label{eq:CH0_2}
\end{align}
We supplement \eqref{eq:CH0_1}--\eqref{eq:CH0_2} with appropriate initial data
$\varphi_0 \in \mathcal{K}$ and boundary data $\nu_\Omega\cdot \nabla\varphi =
\nu_\Omega \cdot \nabla \mu = 0$ on $\partial \Omega$.
 This system is introduced in
\cite{CahnHilliard} and is called Cahn--Hilliard system.

Specific choices of $b(\varphi)$ and $W(\varphi)$ give different analytical
difficulties. Here we restrict to the case of non
degenerate mobility $b(\varphi)\geq\theta$, for some $\theta>0$, and for
simplicity to constant mobility $b(\varphi)\equiv 1$.

For the free energy we choose  the double-obstacle free energy introduced in
\cite{OonoPuri_DoubleObstacelPotential} and analytically investigated in
\cite{BloweyElliott_I}. 
It has the form 
\begin{align*}
  W(\varphi) := 
\begin{cases}
\frac{1}{2}(1-\varphi^2) & \mbox{if } |\varphi|\leq 1,\\
+\infty & \mbox{else}.
\end{cases}
\end{align*}
Since this $W$ is not smooth, \eqref{eq:CH0_2} here has the form of a
variational inequality, see Definition \ref{def:CH}, where we state the precise formulation
of the Cahn--Hilliard system with constant mobility and double-obstacle free
energy.

%Cahn-Hilliard in continous setting
\begin{definition} 
\label{def:CH}
Let $\Omega \subset \mathbb{R}^n$, $n \in \{2,3\}$, denote a given bounded
domain $I = (0,T]$, $T>0$ denote a given time interval. 
 $\varphi_0 \in \mathcal K$  is a given
 initial phase field.

Then the Cahn--Hilliard system with constant mobility and double-obstacle free
energy consists in finding a phase field $\varphi$ and a chemical potential
$\mu$ such that
\begin{equation}
   \label{eq:CH}
\begin{aligned}
  \varphi \in H^1(0,T,(H^1(\Omega))^\star) \cap L^{\infty}(0,T,H^1(\Omega)),\\
  \mu \in L^2(0,T,H^1(\Omega)),\\
  \varphi(t) \in \mathcal K\quad \forall t \in (0,T),\\
  (\varphi_t,v) + (\nabla \mu,\nabla v) = 0 \quad \forall v \in H^1(\Omega),\\
  \epsilon(\nabla \varphi,\nabla v - \nabla \varphi) -
  \epsilon^{-1}(\varphi,v-\varphi) \geq (\mu,v-\varphi) \quad \forall v \in
  \mathcal K,\\
  \nu_\Omega\cdot\nabla\varphi = 0 \quad \mbox{ on } \partial \Omega,\\
  \nu_\Omega\cdot\nabla\mu = 0 \quad \mbox{ on } \partial \Omega,\\
  \varphi(0) = \varphi_0.
\end{aligned}
\end{equation}
\end{definition}

This system is first investigated in \cite{BloweyElliott_I}. We state the
existence and regularity results here.

\begin{theorem}[{\cite{BloweyElliott_I}}]
There exists a unique solution $\varphi,\mu$ to \eqref{eq:CH} fulfilling
$\varphi \in L^2(0,T; H^2(\Omega))$, $\partial_{\nu_\Omega} \varphi = 0$ on
$\partial \Omega$ for a.e. $t$. It further holds $\forall t>0$
$\min(\sqrt{t},1)\|\varphi(t)\|_{H^2(\Omega)} \leq C(\varphi_0)$, and 
$\min(\sqrt{t},1)\|\mu(t)\|_{H^1(\Omega)}\leq C(\varphi_0)$.
\end{theorem}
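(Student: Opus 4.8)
The plan is to construct the solution as the limit of a regularized family, then establish uniqueness and the asserted regularity through energy and smoothing estimates. First I would replace the non-smooth double-obstacle density $W$ by a $C^2$ family $W_\sigma$, $\sigma>0$ (for instance a Moreau--Yosida or penalty approximation with $W_\sigma\to W$ in the appropriate graph sense as $\sigma\to 0$), and write $GL_\sigma$ for the Ginzburg--Landau energy with $W$ replaced by $W_\sigma$. For each fixed $\sigma$ the relaxed system is a classical smooth Cahn--Hilliard problem, whose well-posedness is standard via a Galerkin scheme in the eigenfunctions of the Neumann Laplacian. This reduces the task to deriving a priori bounds that are uniform in $\sigma$ and then passing to the limit.

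The central estimate is the energy dissipation identity. Testing the mass balance with $v=\mu_\sigma$ gives $(\partial_t\varphi_\sigma,\mu_\sigma)+\|\nabla\mu_\sigma\|_{L^2(\Omega)}^2=0$, while testing the potential relation with $\partial_t\varphi_\sigma$ identifies $(\partial_t\varphi_\sigma,\mu_\sigma)=\tfrac{d}{dt}GL_\sigma(\varphi_\sigma)$; combining the two yields
\[
\frac{d}{dt} GL_\sigma(\varphi_\sigma) + \|\nabla \mu_\sigma\|_{L^2(\Omega)}^2 = 0 .
\]
Integrating in time bounds $\varphi_\sigma$ in $L^\infty(0,T;H^1(\Omega))$ and $\nabla\mu_\sigma$ in $L^2(0,T;L^2(\Omega))$ uniformly in $\sigma$. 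Choosing $v\equiv 1$ in the mass balance conserves the mean $(\varphi_\sigma,1)$; controlling the mean of $\mu_\sigma$, the one quantity not governed by $\nabla\mu_\sigma$, requires testing the potential relation against constants and using mass conservation together with the energy bound. Together these give uniform bounds on $\mu_\sigma$ in $L^2(0,T;H^1(\Omega))$ and, via the mass balance, on $\partial_t\varphi_\sigma$ in $L^2(0,T;(H^1(\Omega))^\star)$.

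To pass to the limit I would invoke the Aubin--Lions lemma to extract a subsequence with $\varphi_\sigma\to\varphi$ strongly in $L^2(0,T;H^1(\Omega))$ (hence a.e.) and $\mu_\sigma\rightharpoonup\mu$ weakly in $L^2(0,T;H^1(\Omega))$. The linear mass balance passes to the limit directly; the constraint $\varphi(t)\in\mathcal K$ follows from the uniform energy bound and the structure of $W_\sigma$; and the variational inequality is recovered from the smooth relation $-\epsilon\Delta\varphi_\sigma+\epsilon^{-1}W_\sigma'(\varphi_\sigma)=\mu_\sigma$ by a Minty-type monotonicity argument, testing against $v-\varphi_\sigma$ for $v\in\mathcal K$ and using weak lower semicontinuity of the Dirichlet term. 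Uniqueness I would obtain by subtracting two solutions, testing the difference of the two variational inequalities with the respective competitors to exploit monotonicity of the subdifferential, and testing the difference of the mass balances with the inverse Neumann-Laplacian of $\varphi-\tilde\varphi$ on $V_0$, producing a Gronwall inequality that forces the solutions to coincide.

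The higher regularity is where the work concentrates, and I expect it to be the main obstacle. At a fixed time the potential relation is an obstacle problem with the constant obstacles $\pm 1$ and right-hand side $\mu+\epsilon^{-1}\varphi\in L^2(\Omega)$; classical regularity for variational inequalities with constant obstacles then yields $\varphi(t)\in H^2(\Omega)$, the Neumann condition $\partial_{\nu_\Omega}\varphi=0$, and $\|\varphi(t)\|_{H^2(\Omega)}\leq C(\|\mu(t)\|_{L^2(\Omega)}+1)$. It therefore suffices to bound $\|\mu(t)\|_{H^1(\Omega)}$ by $C\,t^{-1/2}$ for $t\leq 1$, a parabolic smoothing estimate. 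I would derive it from the $H^{-1}$-gradient-flow structure: $GL$ splits into a convex part (the Dirichlet term together with the indicator of $\mathcal K$) plus the smooth concave quadratic $-\tfrac{1}{2\epsilon}\|\varphi\|_{L^2(\Omega)}^2$, so $GL$ is semiconvex and $\varphi$ is the gradient flow of a semiconvex functional, for which the velocity obeys $\tfrac{d}{dt}\|\nabla\mu_\sigma\|_{L^2(\Omega)}^2\leq C\|\nabla\mu_\sigma\|_{L^2(\Omega)}^2$. Differentiating the regularized system in time, testing with the time derivatives, and examining $\tfrac{d}{dt}\big(t\,\|\nabla\mu_\sigma\|_{L^2(\Omega)}^2\big)$ against the bound $\int_0^T\|\nabla\mu_\sigma\|_{L^2(\Omega)}^2\,dt\leq C$ gives, by Gronwall, $\|\nabla\mu_\sigma(t)\|_{L^2(\Omega)}^2\leq C/t$ uniformly in $\sigma$, the semiconvexity contributing only a factor bounded on $[0,T]$ (hence the cap by $1$ in $\min(\sqrt t,1)$). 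Passing to the limit yields the stated weighted bounds; the delicate points are justifying the time differentiation despite the non-smoothness of the obstacle problem and ensuring every constant depends on $\varphi_0$ alone.
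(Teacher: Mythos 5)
The paper does not prove this statement: it is quoted verbatim from Blowey and Elliott \cite{BloweyElliott_I}, so your sketch can only be measured against the strategy of that reference. On that score your outline is essentially the same route: smooth (penalty/Yosida) regularisation of the obstacle potential, the energy dissipation identity, Aubin--Lions compactness, a Minty-type monotonicity argument to recover the variational inequality, uniqueness via the inverse Neumann Laplacian on $V_0$ plus Gronwall, elliptic regularity for the constant-obstacle problem to get $H^2(\Omega)$ and the natural boundary condition, and a weighted parabolic smoothing estimate for the $\min(\sqrt{t},1)$ bounds.

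The one step that would fail as written is the control of the mean of $\mu_\sigma$. Testing the potential relation with $v\equiv 1$ gives $(\mu_\sigma,1)=\epsilon^{-1}(W_\sigma'(\varphi_\sigma),1)$, and the penalty part of $W_\sigma'$ is \emph{not} bounded by the energy: it degenerates as $\sigma\to 0$ exactly where the constraint is active, so ``testing against constants and using the energy bound'' does not close the estimate. The standard repair is to test with $\varphi_\sigma-\overline{\varphi_0}$ and use a structural inequality of the form $W_\sigma'(r)(r-m)\geq \delta\,|W_\sigma'(r)|-C_\delta$, which is available only when the conserved mean satisfies $|\overline{\varphi_0}|\leq 1-\delta$ for some $\delta>0$; this hypothesis (strict interior mean) is needed and should be stated, since without it the chemical potential need not be controlled. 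A smaller caveat: the differentiation in time behind $\tfrac{d}{dt}\|\nabla\mu_\sigma\|_{L^2(\Omega)}^2\leq C\|\nabla\mu_\sigma\|_{L^2(\Omega)}^2$ must be carried out on difference quotients of the regularised system, where $W_\sigma''\geq -1$ supplies the semiconvexity constant, and only then passed to the limit; you flag this, but it is where the real work of the weighted estimates sits rather than a detail to be deferred.
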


In this work we deal with the time discrete variant of \eqref{eq:CH}.
For this let $0=t_0<t_1<\ldots<t_{k-1}<t_k<\ldots < t_M = T$ denote a
decomposition of $I=(0,T]$ with time step size $\tau^k = t_{k}-t_{k-1}$.

Then the time discrete variant of \eqref{eq:CH} in weak form consist of finding
$(\varphi^\star,\mu^\star) \in H^1(\Omega)\times
H^1(\Omega)$ fulfilling
\begin{align}
  (\varphi^\star,v) + \tau^k(\nabla \mu^\star,\nabla v) &= (\varphi^{k-1},v)
  &&\forall v \in H^1(\Omega),\label{eq:CH_td1}\\
  \epsilon(\nabla \varphi^\star,\nabla v-\nabla \varphi^\star) -
  \epsilon^{-1}(\varphi^{k-1},v-\varphi^\star) &\geq (\mu^\star,v-\varphi^\star)
  &&\forall v \in \mathcal K.\label{eq:CH_td2}
\end{align}
We further introduce a parameter $\hp\gg 0$ and the penalising function
\begin{align*}
  \lambda(\varphi) = \lambda_+(\varphi) + \lambda_-(\varphi) 
  := \max(0,\varphi-1) + \min(0,\varphi+1),
\end{align*}
and define the outer approximation, \cite{Glowinski1981},  of 
\eqref{eq:CH_td1}--\eqref{eq:CH_td2} as follows:\\
Find $(\varphi^\hp,\mu^\hp) \in H^1(\Omega) \times H^1(\Omega)$ fulfilling
\begin{align}
  (\varphi^\hp,v) + \tau^k(\nabla \mu^\hp,\nabla v) &= (\varphi^{k-1},v) && \forall
  v \in H^1(\Omega), \label{eq:CH_s1}\\
  \epsilon(\nabla \varphi^\hp,\nabla v) 
  + \epsilon^{-1}(\hp\lambda(\varphi^\hp),v) 
  - \epsilon^{-1}(\varphi^{k-1},v) &=
  (\mu^\hp,v) && \forall v\in H^1(\Omega).\label{eq:CH_s2}
\end{align}

Then the following result holds.

\begin{theorem}[{\cite[Thm. 3.2, Thm. 4.1,Thm. 4.2, Lem. 4.3, Thm.
4.4]{HintermuellerHinzeTber}}] 
\label{thm:allTber}
There exist  unique solutions  $\varphi^\star,\mu^\star$ to
\eqref{eq:CH_td1}--\eqref{eq:CH_td2} and $\varphi^\hp,\mu^\hp$ to 
\eqref{eq:CH_s1}--\eqref{eq:CH_s2}. Moreover 
\begin{align*}
(\varphi^\hp,\mu^\hp) \to (\varphi^\star,\mu^\star)\mbox{ in } H^1(\Omega),  
\end{align*}
and
 there exists a constant $C>0$ independent of $\hp$ such that 
 \begin{align*}
     \|\varphi_\hp\|_{H^1(\Omega)} 
  + \hp\|\lambda(\varphi_\hp)\|_{L^2(\Omega)}
  +\|\mu_\hp\|_{H^1(\Omega)} \leq C.
 \end{align*}

\end{theorem}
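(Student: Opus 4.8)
The plan is to establish the three parts of the theorem in a natural order: first existence and uniqueness for both systems, then the uniform a priori bound, and finally the convergence, which I expect to follow from the bound via a compactness and limit-passage argument. Since the theorem is attributed to several results in \cite{HintermuellerHinzeTber}, I would organize the proof around reproducing the essential mechanism rather than citing blindly.

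\medskip

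First I would treat the relaxed system \eqref{eq:CH_s1}--\eqref{eq:CH_s2}. Eliminating $\mu^\hp$ using \eqref{eq:CH_s1} (test with $v$ and invert the operator $I - \tau^k \Delta$ with Neumann data on the mean-zero component), the system reduces to a single nonlinear equation for $\varphi^\hp$. The key structural fact is that $\lambda$ is the sum of a nondecreasing, Lipschitz, monotone function and that the whole nonlinear operator is the gradient of a strictly convex, coercive functional on $H^1(\Omega)$; indeed $\hp\lambda(\cdot)$ is the derivative of the convex penalty $\tfrac{\hp}{2}\int_\Omega(\max(0,\varphi-1)^2 + \min(0,\varphi+1)^2)\dx$. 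Existence and uniqueness then follow from the direct method of the calculus of variations together with strict convexity, so I would set up the associated energy functional and verify its weak lower semicontinuity and coercivity. For the original variational inequality \eqref{eq:CH_td1}--\eqref{eq:CH_td2}, the analogous functional is minimized over the closed convex set $\mathcal{K}$, and standard results on variational inequalities with coercive bilinear forms (Lions--Stampacchia) give existence and uniqueness.

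\medskip

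The heart of the theorem is the $\hp$-independent bound. I would test \eqref{eq:CH_s2} with $v = \varphi^\hp$ and use \eqref{eq:CH_s1} to substitute for the $\mu^\hp$ term, so as to obtain an energy identity of the form $\epsilon\|\nabla\varphi^\hp\|_{L^2}^2 + \epsilon^{-1}\hp(\lambda(\varphi^\hp),\varphi^\hp) + (\text{dissipation term}) = (\varphi^{k-1},\cdot)$. The crucial inequality is that $(\lambda(\varphi),\varphi) \geq (\lambda(\varphi),\lambda(\varphi)) = \|\lambda(\varphi)\|_{L^2}^2$, which holds pointwise since $\lambda(\varphi)$ and $\varphi$ have the same sign and $|\varphi| \geq |\lambda(\varphi)| + 1$ wherever $\lambda(\varphi)\neq 0$. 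This is exactly what converts the monotonicity of the penalty into a quantitative bound on $\hp\|\lambda(\varphi^\hp)\|_{L^2}$. Combined with the mass conservation $(\varphi^\hp,1) = (\varphi^{k-1},1)$ coming from testing \eqref{eq:CH_s1} with the constant $v\equiv 1$, a Poincar\'e inequality on the mean-zero part, and Young's inequality to absorb cross terms, this yields the uniform $H^1$ bound on $\varphi^\hp$ and the bound on $\hp\|\lambda(\varphi^\hp)\|_{L^2}$. The bound on $\|\mu^\hp\|_{H^1}$ then comes from reading \eqref{eq:CH_s2} as an elliptic equation for $\mu^\hp$ with right-hand side controlled by the already-bounded quantities, again using \eqref{eq:CH_s1} to fix the mean of $\mu^\hp$.

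\medskip

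Finally, for convergence, the uniform bound gives a weakly convergent subsequence $\varphi^\hp \rightharpoonup \varphi^\star$ in $H^1(\Omega)$, hence strongly in $L^2(\Omega)$ by compact embedding; moreover $\hp\|\lambda(\varphi^\hp)\|_{L^2}\leq C$ forces $\|\lambda(\varphi^\hp)\|_{L^2}\to 0$, so the limit satisfies $\lambda(\varphi^\star) = 0$, i.e. $\varphi^\star\in\mathcal{K}$. Passing to the limit in \eqref{eq:CH_s1} is routine by weak convergence. The limit in \eqref{eq:CH_s2} is the delicate point and the main obstacle: I would test with $v - \varphi^\hp$ for arbitrary $v\in\mathcal{K}$ and use the sign property $(\hp\lambda(\varphi^\hp), v - \varphi^\hp)\leq 0$ (since $v\in\mathcal{K}$ makes $\lambda(\varphi^\hp)$ paired against $v-\varphi^\hp$ have the correct sign) to drop the penalty term in the limit and recover the inequality \eqref{eq:CH_td2}. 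Upgrading weak to strong $H^1$ convergence, as the theorem asserts, requires showing $\|\nabla\varphi^\hp\|_{L^2}\to\|\nabla\varphi^\star\|_{L^2}$, which I would obtain by combining the weak convergence with the variational inequality tested in both directions; this, together with uniqueness of the limit, also removes the need to pass to a subsequence.
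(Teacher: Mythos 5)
First, note that the paper gives no proof of this theorem at all: it is imported verbatim from \cite{HintermuellerHinzeTber} (Thm.~3.2, 4.1, 4.2, Lem.~4.3, Thm.~4.4), so there is no in-paper argument to compare against; your proposal has to stand on its own. Its overall architecture (variational existence/uniqueness, a priori bounds, compactness and limit passage with the sign property of the penalty) is the right one, and the existence and convergence parts are essentially sound.

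There is, however, a genuine gap in the central estimate. Testing \eqref{eq:CH_s2} with $v=\varphi^{\hp}$ and using $(\lambda(\varphi^{\hp}),\varphi^{\hp})\geq\|\lambda(\varphi^{\hp})\|_{L^2(\Omega)}^2$ yields $\hp\|\lambda(\varphi^{\hp})\|_{L^2(\Omega)}^2\leq C$, i.e.\ $\|\lambda(\varphi^{\hp})\|_{L^2(\Omega)}\leq C\hp^{-1/2}$, and hence only $\hp\|\lambda(\varphi^{\hp})\|_{L^2(\Omega)}\leq C\hp^{1/2}$, which diverges; this does \emph{not} give the asserted bound $\hp\|\lambda(\varphi^{\hp})\|_{L^2(\Omega)}\leq C$. (The same energy identity, combined with $|\varphi^{\hp}|\geq 1$ on the support of $\lambda(\varphi^{\hp})$, does give $\hp\|\lambda(\varphi^{\hp})\|_{L^1(\Omega)}\leq C$ — that is exactly Theorem \ref{thm:L1bound} of the paper — but the $L^2$ multiplier bound is strictly stronger.) The missing step is a second test exploiting the monotonicity of $\lambda$: once $\|\mu^{\hp}\|_{L^2(\Omega)}$ is under control (gradient from the energy identity, mean from testing \eqref{eq:CH_s2} with $v\equiv 1$ together with the $L^1$ bound on $\hp\lambda(\varphi^{\hp})$), test \eqref{eq:CH_s2} with $v=\hp\lambda(\varphi^{\hp})\in H^1(\Omega)$. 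Since $\epsilon\hp\int_\Omega\lambda'(\varphi^{\hp})|\nabla\varphi^{\hp}|^2\dx\geq 0$, this gives
\begin{align*}
\epsilon^{-1}\hp^2\|\lambda(\varphi^{\hp})\|_{L^2(\Omega)}^2
\leq \bigl(\|\mu^{\hp}\|_{L^2(\Omega)}+\epsilon^{-1}\|\varphi^{k-1}\|_{L^2(\Omega)}\bigr)\,\hp\|\lambda(\varphi^{\hp})\|_{L^2(\Omega)},
\end{align*}
hence $\hp\|\lambda(\varphi^{\hp})\|_{L^2(\Omega)}\leq C$ (equivalently, one can test with $-\Delta\varphi^{\hp}$ to get a uniform $H^2$ bound and read the multiplier bound off the equation). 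This step cannot be skipped: the paper relies on precisely this bound in Section \ref{sec:Linfty1} to conclude that $\|\varphi^{\hp}\|_{H^2(\Omega)}$ is uniformly bounded, which in turn supplies the $\hp$-independent H\"older constant used in Theorems \ref{thm:Linfty_1} and \ref{thm:Linfty_2}.
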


% \begin{theorem}[{\cite[Thm. 4.1]{HintermuellerHinzeTber}}]
% The problem \ref{eq:Ps} has a unique solution $(\varphi_\hp,p_\hp)$.
% Moreover there exists a unique Lagrange multiplier $\mu_s\in H^1(\Omega)$ such
% that $p_\hp = \mu_\hp - (\mu_\hp,1)$ and $(\varphi_\hp,\mu_\hp)$ is a
% solution to \eqref{eq:CH_s1}--\eqref{eq:CH_s2}. Conversely, if
% $(\varphi_\hp,\mu_\hp)$ is a solution to \eqref{eq:CH_s1}--\eqref{eq:CH_s2},
% then $(\varphi_\hp,p_\hp)$ with $p_\hp = \mu_\hp - (\mu_\hp,1)$ is the unique
% solution to \ref{eq:Ps}.
% \end{theorem}
% The uniqueness of the solution to \eqref{eq:CH_s1}--\eqref{eq:CH_s2}
% follows from standard techniques using convexity of $\lambda(\varphi)$ and
% concavity of $-\varphi^{k-1}$.

\begin{remark}
We introduce a free energy density
$W^{\hp}(\varphi) = \frac{1}{2}(1-\varphi^2) + \frac{\hp}{2}\lambda(\varphi)^2$.
Then \eqref{eq:CH_s2} can also be obtained as time discretization of
\eqref{eq:CH0_1}--\eqref{eq:CH0_2} with this free energy.
However, we note that $W^\hp$ is not a valid free energy in the sense that its
minima are located at $\theta = \pm\left(1+\frac{\hp}{\hp-1}\right)$ and attain
negative values.
After shifting $W^\hp$ to have non negative values and scaling its argument by
$\theta$, this $W^\hp$ might be regarded as a new type of free energy.

 From the point of treating  variational inequalities,
 \eqref{eq:CH_s2} is an outer approximation \cite{Glowinski1981} of
 \eqref{eq:CH_td2}, while the logarithmic free energy proposed in
 \cite{CahnHilliard} can be regarded as an inner approximation.
\end{remark}

% Last we have the convergence of the solution to the relaxed system
% \eqref{eq:CH_s1}--\eqref{eq:CH_s2} to the original system
% \eqref{eq:CH_td1}--\eqref{eq:CH_td2}.
% 
% \begin{theorem}[{\cite[Thm. 4.2, Lem. 4.3, Thm. 4.4]{HintermuellerHinzeTber}}]
% \label{thm:phimu_converge_bounded}
% Let $(\varphi_\hp, \mu_\hp)_{\hp>0}$ be a sequence of solutions to 
% \eqref{eq:CH_td1}--\eqref{eq:CH_td2} as $\hp\to +\infty$. Then there exists a
% subsequence still denoted by  $(\varphi_\hp, \mu_\hp)_{\hp>0}$ such that
% \begin{align*}
%   (\varphi_\hp,\mu_\hp) \to (\varphi^\star,\mu^\star)\mbox{ in } H^1(\Omega).
% \end{align*} 
% There further exist a constant $C>0$ independent of $\hp$ such that for $\hp\to
% \infty$
% \begin{align*}
%   \|\varphi_\hp\|_{H^1(\Omega)} 
%   + \hp\|\lambda(\varphi_\hp)\|_{L^2(\Omega)}
%   +\|\mu_\hp\|_{H^1(\Omega)} \leq C.
% \end{align*}
% \end{theorem}

\section{A first $L^\infty$ bound on the constraint violation}
\label{sec:Linfty1}
In this section we follow the approach in
\cite[Sec. 2]{HintermuellerSchielaWollner_length_PD_path_MY_path_following}.
Using the uniform boundedness of $\varphi^\hp$ and $\mu^\hp$ from Theorem
\ref{thm:allTber} we first derive $L^1(\Omega)$ bounds for the
constraints violation $\lambda(\varphi^\hp)$ that we later use to derive a first
bound on $\|\lambda(\varphi^\hp)\|_{L^\infty(\Omega)}$.

Note that from regularity theory for the Laplace operator with Neumann boundary
data, see e.g. \cite{Taylor_PDE}, we have $\varphi^\hp\in H^2(\Omega)$ 
and from \eqref{eq:CH_s2} and Theorem \ref{thm:allTber} it follows that
$\|\varphi^\hp\|_{H^2(\Omega)}$ is uniformly bounded in $\hp$.
From Sobolev embedding theory we further have 
$H^2(\Omega)\hookrightarrow C^{0,\beta}(\overline \Omega)$ 
for $\beta < 2-\frac{n}{2}$, and thus indeed
$\lambda(\varphi^\hp) \in L^\infty(\Omega)$. 
From Theorem \ref{thm:allTber} we further have
$\lambda(\varphi^\hp) \to 0$ for $\hp\to \infty$, 
compare also the discussion in
\cite[Rem. 6]{GarckeHinzeKahle_CHNS_AGG_linearStableTimeDisc}.

\begin{theorem}
\label{thm:L1bound}
There exists $C>0$ independent of $\hp$, such that for $\hp \to \infty$  it
holds
\begin{align*}
  \|\lambda(\varphi^\hp)\|_{L^1(\Omega)} \leq C\hp^{-1}.
\end{align*}
\end{theorem}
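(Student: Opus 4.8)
The plan is to test the relaxed equation \eqref{eq:CH_s2} with a suitably chosen function that forces \emph{both} branches of the constraint violation to contribute with a favourable sign, and then to read off the $L^1$ bound from the uniform $H^1$ estimates of Theorem \ref{thm:allTber}. The key observation is the pointwise inequality
\begin{align*}
  \lambda(\varphi^\hp)(x)\,\varphi^\hp(x) \geq |\lambda(\varphi^\hp)(x)| \quad \text{for a.e. } x\in\Omega.
\end{align*}
Indeed, $\lambda_+(\varphi^\hp)$ is supported on $\{\varphi^\hp>1\}$, where $\lambda(\varphi^\hp)\varphi^\hp-|\lambda(\varphi^\hp)| = (\varphi^\hp-1)\varphi^\hp-(\varphi^\hp-1) = (\varphi^\hp-1)^2\geq 0$; symmetrically $\lambda_-(\varphi^\hp)$ is supported on $\{\varphi^\hp<-1\}$, where $\lambda(\varphi^\hp)\varphi^\hp-|\lambda(\varphi^\hp)| = (\varphi^\hp+1)\varphi^\hp+(\varphi^\hp+1) = (\varphi^\hp+1)^2\geq 0$. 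As the two supports are disjoint and $\lambda(\varphi^\hp)=0$ on $\{|\varphi^\hp|\leq 1\}$, integration yields $(\lambda(\varphi^\hp),\varphi^\hp) \geq \|\lambda(\varphi^\hp)\|_{L^1(\Omega)}$.

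Next I would choose $v=\varphi^\hp \in H^1(\Omega)$ as the test function in \eqref{eq:CH_s2}, which gives
\begin{align*}
  \epsilon\|\nabla\varphi^\hp\|_{L^2(\Omega)}^2 + \epsilon^{-1}\hp(\lambda(\varphi^\hp),\varphi^\hp) - \epsilon^{-1}(\varphi^{k-1},\varphi^\hp) = (\mu^\hp,\varphi^\hp).
\end{align*}
Dropping the nonnegative gradient term and invoking the pointwise inequality, I arrive at
\begin{align*}
  \epsilon^{-1}\hp\|\lambda(\varphi^\hp)\|_{L^1(\Omega)} \leq \epsilon^{-1}\hp(\lambda(\varphi^\hp),\varphi^\hp) \leq (\mu^\hp,\varphi^\hp) + \epsilon^{-1}(\varphi^{k-1},\varphi^\hp).
\end{align*}

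It remains to bound the right-hand side uniformly in $\hp$. By Cauchy--Schwarz, $(\mu^\hp,\varphi^\hp)\leq \|\mu^\hp\|_{L^2(\Omega)}\|\varphi^\hp\|_{L^2(\Omega)}$ and $(\varphi^{k-1},\varphi^\hp)\leq \|\varphi^{k-1}\|_{L^2(\Omega)}\|\varphi^\hp\|_{L^2(\Omega)}$. Theorem \ref{thm:allTber} supplies $\|\mu^\hp\|_{H^1(\Omega)}+\|\varphi^\hp\|_{H^1(\Omega)}\leq C$ independent of $\hp$, whence the $L^2$ norms of $\mu^\hp$ and $\varphi^\hp$ are bounded uniformly; moreover $\varphi^{k-1}\in\mathcal K$ gives $\|\varphi^{k-1}\|_{L^2(\Omega)}\leq|\Omega|^{1/2}$. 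Collecting these estimates bounds the right-hand side by a constant independent of $\hp$, and multiplying through by $\epsilon\hp^{-1}$ yields $\|\lambda(\varphi^\hp)\|_{L^1(\Omega)}\leq C\hp^{-1}$.

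I expect the only genuinely delicate point to be the choice of test function and the verification of the pointwise inequality: using $v=\varphi^\hp$ (rather than, say, $v=1$, which only controls the signed quantity $(\lambda_++\lambda_-,1)$ and allows cancellation between the two violation regions) is exactly what makes both $\|\lambda_+\|_{L^1(\Omega)}$ and $\|\lambda_-\|_{L^1(\Omega)}$ appear with the correct sign. Everything else reduces to Cauchy--Schwarz together with the uniform bounds already recorded in Theorem \ref{thm:allTber}.
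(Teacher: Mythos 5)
Your proof is correct and follows essentially the same route as the paper: test \eqref{eq:CH_s2} with $v=\varphi^\hp$, exploit the pointwise inequality $\lambda(\varphi^\hp)\varphi^\hp \geq |\lambda(\varphi^\hp)|$ on the two disjoint violation sets, and invoke the uniform bounds of Theorem \ref{thm:allTber}. The only immaterial difference is that the paper additionally tests \eqref{eq:CH_s1} with $v=\mu^\hp$ and adds the two equations, thereby replacing the term $(\mu^\hp,\varphi^\hp)$ by $-\tau^k\|\nabla\mu^\hp\|^2+(\varphi^{k-1},\mu^\hp)$, whereas you estimate $(\mu^\hp,\varphi^\hp)$ directly by Cauchy--Schwarz; both steps rest on the same uniform $H^1$ bounds.
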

\begin{proof}
We test \eqref{eq:CH_s1} with $v \equiv \mu^\hp$, \eqref{eq:CH_s2} with
$v\equiv \varphi^\hp$ and add the two equations yielding
\begin{align*}
\epsilon^{-1}\hp(\lambda(\varphi^\hp),\varphi^\hp) &=
-\tau^k\|\nabla \mu^\hp\|^2 
+ (\varphi^{k-1},\mu^\hp) 
-\epsilon \|\nabla \varphi^\hp\|^2
 + \epsilon^{-1}(\varphi^{k-1},\varphi^\hp) \\
 &\leq C 
\end{align*}
with a $C>0$ independent of $\hp$ by the uniform boundedness of
$\varphi^\hp$ and $\mu^\hp$ in $H^1(\Omega)$.

By noting $\varphi^\hp>1 \Leftrightarrow \lambda_+(\varphi^\hp) > 0$
and $\varphi^\hp<-1 \Leftrightarrow \lambda_-(\varphi^\hp) < 0$, 
we further
have
\begin{align*}
  \int_\Omega \lambda_+(\varphi^\hp) \varphi^\hp \dx
  \geq \int_\Omega |\lambda_+(\varphi^\hp)|\dx,\quad
  \int_\Omega \lambda_-(\varphi^\hp) \varphi^\hp \dx
  \geq \int_\Omega |\lambda_-(\varphi^\hp)|\dx,  
\end{align*}
and thus 
\begin{align*}
  \epsilon^{-1}\hp \|\lambda(\varphi^\hp)\|_{L^1(\Omega)} \leq
  \epsilon^{-1}\hp(\lambda(\varphi^\hp),\varphi^\hp)\leq C
\end{align*}
which completes the proof.
\end{proof}

We next derive a bound on $\|\lambda(\varphi^\hp)\|_{L^\infty}$ in terms of
$\|\lambda(\varphi^\hp)\|_{L^1(\Omega)}$. 
Here we follow \cite[Thm. 3.7]{Hintermueller_Keil_Wegner__OPT_CHNS_density},
where \cite[Thm. 2.4]{HintermuellerSchielaWollner_length_PD_path_MY_path_following}
is adapted.

\begin{theorem}\label{thm:Linfty_1}
It holds 
\begin{align*}
  \|\lambda(\varphi^\hp)\|_{L^\infty(\Omega)} 
  \leq
  C(\Omega,n,\beta)\|\lambda(\varphi^\hp)\|_{L^1(\Omega)}^\frac{\beta}{\beta+n}
\end{align*}
\end{theorem}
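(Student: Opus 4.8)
The plan is to exploit the uniform H\"older continuity of $\lambda(\varphi^\hp)$ and to compare its supremum with its $L^1$ mass on a small ball centred at the maximizer. Write $f := \lambda(\varphi^\hp)$ and $M := \|f\|_{L^\infty(\Omega)}$. Since $\lambda$ is $1$-Lipschitz and $\|\varphi^\hp\|_{H^2(\Omega)}$ is bounded uniformly in $\hp$ by Theorem \ref{thm:allTber}, the embedding $H^2(\Omega)\hookrightarrow C^{0,\beta}(\overline\Omega)$ shows that $f$ is H\"older continuous on $\overline\Omega$ with a seminorm $[f]_{C^{0,\beta}}\leq L$, where $L=L(\Omega,n,\beta)$ is independent of $\hp$. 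Because $f$ is continuous on the compact set $\overline\Omega$, there is a point $x_0\in\overline\Omega$ with $|f(x_0)|=M$; assume $f(x_0)=M$, the case $f(x_0)=-M$ being symmetric.

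First I would localize around $x_0$. For every $x\in\overline\Omega$ the H\"older estimate gives $f(x)\geq f(x_0)-L|x-x_0|^\beta = M-L|x-x_0|^\beta$. Choosing the radius $r:=\left(\tfrac{M}{2L}\right)^{1/\beta}$ then yields $f(x)\geq M/2>0$, and in particular $|f(x)|\geq M/2$, for all $x\in\overline\Omega$ with $|x-x_0|\leq r$.

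Next I would use the geometry of $\Omega$ to bound the measure of $B_r(x_0)\cap\Omega$ from below. The cone condition ensures constants $c>0$ and $r_0>0$, depending only on $\Omega$ and $n$, such that $|B_r(x_0)\cap\Omega|\geq c\,r^n$ for every $x_0\in\overline\Omega$ and every $0<r\leq r_0$. Integrating the pointwise lower bound over $B_r(x_0)\cap\Omega$ then gives
\begin{align*}
\|f\|_{L^1(\Omega)}
\geq \int_{B_r(x_0)\cap\Omega}|f|\dx
\geq \frac{M}{2}\,|B_r(x_0)\cap\Omega|
\geq \frac{c}{2}\,M\,r^n
= \frac{c}{2(2L)^{n/\beta}}\,M^{\frac{\beta+n}{\beta}}.
\end{align*}
Solving this inequality for $M$ produces $M\leq C(\Omega,n,\beta)\,\|f\|_{L^1(\Omega)}^{\beta/(\beta+n)}$, which is the asserted bound.

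The step I expect to be the main obstacle is the uniform interior-measure estimate $|B_r(x_0)\cap\Omega|\geq c\,r^n$: for interior points it is immediate, but near $\partial\Omega$ it must be extracted from the cone condition, and one has to check that the admissible range $r\leq r_0$ is respected. The latter is harmless here, since $M=\|\lambda(\varphi^\hp)\|_{L^\infty(\Omega)}\to 0$ as $\hp\to\infty$ by Theorem \ref{thm:allTber}, so for $\hp$ large the optimizing radius $r=(M/2L)^{1/\beta}$ is as small as needed. A secondary point to record carefully is that $L$ must be taken uniform in $\hp$, which is again exactly the uniform $H^2(\Omega)$ bound.
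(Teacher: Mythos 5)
Your proof is correct and follows essentially the same route as the paper's: both use the $\hp$-uniform $H^2(\Omega)$ bound to get uniform H\"older continuity, conclude that the violation stays above half its maximum on a set whose measure the cone condition bounds below by a constant times $R^n$ with $R\sim\left(\frac{M}{2L}\right)^{1/\beta}$, and then integrate and solve for $M$. The only cosmetic differences are that you transfer the H\"older bound to $\lambda(\varphi^\hp)$ via the Lipschitz continuity of $\lambda$ and integrate over $B_r(x_0)\cap\Omega$, whereas the paper works with $\varphi^\hp$ itself on an explicit cone $K_R(x_{\max})\subset\Omega$, and that you resolve the small-radius restriction by invoking $\|\lambda(\varphi^\hp)\|_{L^\infty(\Omega)}\to 0$ while the paper takes $R$ as a minimum with the cone radius.
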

\begin{proof}
From $\varphi^\hp \in H^2(\Omega)$ we have the H\"older continuity of
$\varphi^\hp \in C^{0,\beta}(\bar{\Omega})$ for $\beta<2-\frac{n}{2}$, and thus
there exists $C_\beta>0$ independent of $\hp$ with
$\|\varphi^\hp\|_{C^{0,\beta}(\Omega)} \leq C_\beta$.

For fixed $\hp$ 
we set $G^+ = \{ x\in \Omega \,|\, \varphi^\hp(x)\geq1\}$ and define
$x_{\max} \in G^+$ to satisfy
\begin{align*}
  \varphi^\hp(x_{\max})-1 = \|\varphi^\hp - 1\|_{L^\infty(G^+)}.
\end{align*}
We set $G^- = \{ x \in \Omega \,|\, \varphi^\hp(x) \leq -1\}$ and define 
$x_{\min} \in G^-$ to satisfy
\begin{align*}
  -(\varphi^\hp(x_{\min})+1) = \|\varphi^\hp + 1\|_{L^\infty(G^-)}.
\end{align*}
Now either $\varphi^\hp(x_{\max})-1 \equiv
\|\lambda(\varphi^\hp)\|_{L^\infty(\Omega)}$ 
or 
$-(\varphi^\hp(x_{\min})+1) \equiv
\|\lambda(\varphi^\hp)\|_{L^\infty(\Omega)}$ holds.
W.l.o.g. we assume 
$\varphi^\hp(x_{\max})-1 \equiv
\|\lambda(\varphi^\hp)\|_{L^\infty(\Omega)}$.

From the definition of H\"older continuity we have  
\begin{align*}
  \varphi^\hp(x)-1 \geq \varphi^\hp(x_{\max}) -1
  -\|\varphi^\hp\|_{C^{0,\beta}(\Omega)}|x_{\max}-x|^\beta.
\end{align*}
Thus, for $|x_{\max}-x| \leq 
\left(\frac{\varphi^\hp(x_{\max})-1}{2C_\beta} \right)^{1/\beta}$
\begin{align*}
  \varphi^\hp(x)-1 \geq \frac{1}{2}(\varphi^\hp(x_{\max})-1) > 0
\end{align*}
holds.

The domain $\Omega$ satisfies the cone condition. Thus there exists a cone
$K_r(x_{\max}) := K(x_{\max}) \cap B(x_{\max},r)$ of radius $r$ and with vertex
$x_{\max}$ such that $K_r(x_{\max}) \subset \Omega$.
Hence the cone $K_R(x_{\max})$ with
$R:=\min\left(r,\left(\frac{\|\lambda(\varphi^\hp)\|_{L^\infty(\Omega)}}{2C_\beta}
\right)^{1/\beta}\right)$ is contained in $G$.

From this we conclude
\begin{align*}
  \|\lambda(\varphi^\hp)\|_{L^1(\Omega)}
   &\geq   \int_{K_R(x_{\max})} \varphi^\hp-1\dx\\
   &\geq \int_ {K_R(x_{\max})} \frac{1}{2}(\varphi^\hp(x_{\max})-1)\dx\\
   &= \frac{|K_R(x_{\max})|}{2}\|\lambda(\varphi^\hp)\|_{L^\infty(\Omega)}\\
   &\geq C(\Omega,n) 
   \left(\frac{\|\lambda(\varphi^\hp)\|_{L^\infty(\Omega)}}{2C_\beta} \right)^{n/\beta}
   \|\lambda(\varphi^\hp)\|_{L^\infty(\Omega)}\\
   &=
   C(\Omega,n,\beta)\|\lambda(\varphi^\hp)\|^{\frac{n+\beta}{\beta}}_{L^\infty(\Omega)}
\end{align*}

\end{proof}

\begin{corollary}
From Theorem \ref{thm:Linfty_1} we have the following $L^\infty$ bounds for the
violation of the constraint $|\varphi| \leq 1$.
\begin{align*}
  \|\lambda(\varphi^\hp)\|_{L^\infty(\Omega)} \leq 
  \begin{cases}
  C\hp^{-1/3+\gamma} & \mbox{ if } n = 2,\\
  C\hp^{-1/7+\gamma} & \mbox{ if } n = 3,
  \end{cases}
\end{align*}
 where $0<\gamma\ll 1$.
\end{corollary}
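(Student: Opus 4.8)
The plan is to simply chain the two preceding results, so this corollary is essentially a bookkeeping consequence. Combining the interpolation-type estimate of Theorem~\ref{thm:Linfty_1} with the $L^1$ decay rate of Theorem~\ref{thm:L1bound}, I would write
\begin{align*}
  \|\lambda(\varphi^\hp)\|_{L^\infty(\Omega)}
  \leq C(\Omega,n,\beta)\|\lambda(\varphi^\hp)\|_{L^1(\Omega)}^{\frac{\beta}{\beta+n}}
  \leq C(\Omega,n,\beta)\,C^{\frac{\beta}{\beta+n}}\,\hp^{-\frac{\beta}{\beta+n}},
\end{align*}
so that the resulting exponent of $\hp$ is $-\beta/(\beta+n)$ for any admissible H\"older exponent $\beta$. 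Here $C$ denotes the constant from Theorem~\ref{thm:L1bound}, and I use that $\|\varphi^\hp\|_{C^{0,\beta}(\overline{\Omega})}$ is uniformly bounded in $\hp$, as already recorded in the setup of Theorem~\ref{thm:Linfty_1}.

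Next I would optimize this exponent over the range of admissible $\beta$. The Sobolev embedding $H^2(\Omega)\hookrightarrow C^{0,\beta}(\overline{\Omega})$ underlying Theorem~\ref{thm:Linfty_1} is valid precisely for $\beta < 2-\frac{n}{2}$, and the map $\beta \mapsto \frac{\beta}{\beta+n}$ is strictly increasing, since its derivative $\frac{n}{(\beta+n)^2}$ is positive. Hence the best rate is obtained by pushing $\beta$ towards its supremum $2-\frac{n}{2}$, giving the limiting exponent $\frac{2-n/2}{2+n/2}$. A direct computation shows this limit equals $\frac{1}{3}$ for $n=2$ and $\frac{1}{7}$ for $n=3$, which identifies the leading exponents appearing in the statement.

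The only genuine subtlety, and the reason the stated bound carries the loss $\gamma$, is that the embedding fails at the endpoint $\beta = 2-\frac{n}{2}$, so this supremum is never attained. Consequently, for each prescribed $0<\gamma\ll 1$ I would fix an admissible $\beta < 2-\frac{n}{2}$ close enough to the endpoint that $\frac{\beta}{\beta+n} \geq \frac{1}{3}-\gamma$ (respectively $\frac{1}{7}-\gamma$), and then absorb the $\beta$-dependent constants into a single $C$; this yields the claimed rates $C\hp^{-1/3+\gamma}$ and $C\hp^{-1/7+\gamma}$. I expect no real obstacle beyond this endpoint argument, the one point worth flagging being that the constant $C=C(\gamma)$ degenerates as $\gamma \to 0$, so the estimate is not uniform in the loss parameter $\gamma$.
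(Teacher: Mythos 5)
Your proposal is correct and follows essentially the same route as the paper: chain Theorem \ref{thm:L1bound} into Theorem \ref{thm:Linfty_1} to get the exponent $-\beta/(\beta+n)$, then let $\beta$ approach $2-\tfrac{n}{2}$ (i.e. $1$ for $n=2$, $\tfrac12$ for $n=3$) to obtain the limiting rates $1/3$ and $1/7$ up to the loss $\gamma$. Your added remark that the constant degenerates as $\gamma\to 0$ is a correct refinement the paper leaves implicit.
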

\begin{proof}
It holds 
\begin{align*}
  \|\lambda(\varphi^\hp)\|_{L^\infty(\Omega)} 
  \leq C \|\lambda(\varphi^\hp)\|_{L^1(\Omega)}^{\frac{\beta}{n+\beta}}
  \leq C \hp^{-\frac{\beta}{n+\beta}}.
\end{align*}
For $n=2$ we have $\beta<1$ and thus the first claim.\\
For $n=3$ we have $\beta<\frac{1}{2}$ and thus the second claim.
\end{proof}

\section{A structural assumption to improve the $L^\infty$ bound}
\label{sec:Linfty2}
In the previous section we derived a bound of the violation
$\|\lambda(\varphi^\hp)\|_{L^\infty(\Omega)}$ based on its $\|\lambda(\varphi^\hp)\|_{L^1(\Omega)}$ norm.
We now state an assumption on the order of $\|\lambda(\varphi^\hp)\|_{L^1(\Omega)}$ to obtain an
improved $L^\infty(\Omega)$ bound.

\begin{assumption}
[{\cite[Ass. 2.13]{HintermuellerSchielaWollner_length_PD_path_MY_path_following}}]
\label{ass:structure}
For $x \in \Omega$ we assume that there exists $0<K<\infty$, independent of
$x,R$, and $\hp$ such that
\begin{align*}
  \int_{B_R(x)} \hp  |\lambda(\varphi^\hp)|\dx \leq KR^n
\end{align*}
holds.
\end{assumption}
This assumption states, that $G = \{ x\in \Omega \,|\, \lambda(\varphi^\hp(x))
\neq 0 \}$ is a subset of $\Omega$ that has a  measure comparable to the
measure of $\Omega$ and is not a lower dimensional manifold.
 This is a reasonable assumption since in our application $G$ describes the
 bulk phases which fill most parts of the domain.

\begin{theorem}\label{thm:Linfty_2}
Let Assumption \ref{ass:structure} hold.
Then it holds
\begin{align*}
  \|\lambda(\varphi^\hp)\|_{L^\infty(\Omega)} 
  \leq C(\Omega,n,\beta,K)\hp^{-1}.
\end{align*}
\end{theorem}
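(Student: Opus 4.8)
The plan is to rerun the cone argument from the proof of Theorem \ref{thm:Linfty_1}, but to replace the global $L^1(\Omega)$ estimate by the local, scale-invariant bound supplied by Assumption \ref{ass:structure}. The decisive observation is that both the structural bound and the volume of the cone scale like $R^n$, so that the radius $R$ cancels entirely and one is left directly with a clean $\hp^{-1}$ estimate, rather than with a fractional power as before.

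Concretely, I would set up exactly as in Theorem \ref{thm:Linfty_1}. Write $L_\infty := \|\lambda(\varphi^\hp)\|_{L^\infty(\Omega)}$ and, without loss of generality, assume the maximal violation is attained on $G^+$ at a point $x_{\max}$ with $\varphi^\hp(x_{\max}) - 1 = L_\infty$. Using the uniform H\"older bound $\|\varphi^\hp\|_{C^{0,\beta}(\Omega)} \leq C_\beta$, for every $x$ with $|x - x_{\max}| \leq R_0 := (L_\infty/(2C_\beta))^{1/\beta}$ one obtains $\varphi^\hp(x) - 1 \geq \tfrac{1}{2}L_\infty$, whence $|\lambda(\varphi^\hp(x))| = \lambda_+(\varphi^\hp(x)) \geq \tfrac{1}{2}L_\infty$ on this ball. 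By the cone condition there is a cone $K_r(x_{\max}) \subset \Omega$ of radius $r$ and fixed opening, and setting $R := \min(r, R_0)$ the truncated cone $K_R(x_{\max}) \subset \Omega$ lies entirely in the region where $\lambda_+(\varphi^\hp) \geq \tfrac{1}{2}L_\infty$.

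The new ingredient is to estimate the local weighted integral of $\hp|\lambda(\varphi^\hp)|$ from above using Assumption \ref{ass:structure} instead of the $L^1$ bound. Since $K_R(x_{\max}) \subset B_R(x_{\max})$, the assumption gives $\int_{B_R(x_{\max})} \hp |\lambda(\varphi^\hp)| \dx \leq K R^n$, while the lower bound extracted from the cone reads $\int_{K_R(x_{\max})} \hp |\lambda(\varphi^\hp)| \dx \geq \tfrac{1}{2}\hp\, L_\infty\, |K_R(x_{\max})|$. Because a cone of fixed opening angle satisfies $|K_R(x_{\max})| \geq c(\Omega,n) R^n$, combining the two bounds yields $\tfrac{1}{2} c(\Omega,n)\, \hp\, R^n L_\infty \leq K R^n$. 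The factor $R^n$ cancels, and one concludes $L_\infty \leq 2K/(c(\Omega,n)\hp)$, which is the claim.

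The step I expect to require the most care is making the $R^n$ cancellation rigorous, i.e.\ verifying that the lower bound $|K_R(x_{\max})| \geq c(\Omega,n)R^n$ holds with a constant genuinely independent of $R$ (and of $x_{\max}$ and $\hp$), and checking that the two cases $R = R_0$ and $R = r$ both reduce to the same estimate. In particular, when $R = R_0$ the radius $R_0$ itself depends on the unknown quantity $L_\infty$; it is precisely the homogeneity built into Assumption \ref{ass:structure} that renders this harmless, since $R_0$ never needs to be evaluated and disappears in the cancellation.
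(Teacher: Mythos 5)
Your proposal is correct and follows essentially the same route as the paper: it reruns the cone construction from Theorem \ref{thm:Linfty_1}, replaces the global $L^1(\Omega)$ bound by the local $R^n$-scaled bound of Assumption \ref{ass:structure} on $B_R(x_{\max})\supset K_R(x_{\max})$, and cancels the common factor $R^n$ (the paper performs the same cancellation after first rewriting the lower bound as $C\|\lambda(\varphi^\hp)\|_{L^\infty(\Omega)}^{(n+\beta)/\beta}$ and substituting the definition of $R$). Your explicit check that the two cases $R=r$ and $R=R_0$ both yield the same conclusion is a welcome extra bit of care that the paper leaves implicit.
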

\begin{proof}
Using Assumption \ref{ass:structure} in the proof of Theorem \ref{thm:Linfty_1}
we obtain
\begin{align*}
  K\hp^{-1}R^n &\geq \|\lambda(\varphi^\hp)\|_{L^1(\Omega)}\\
&\geq
C(\Omega,n,\beta)\|\lambda(\varphi^\hp)\|_{L^\infty(\Omega)}^{\frac{\beta+n}{\beta}}
\end{align*} 
Restating the definition of
 $R:=\min\left(r,\left(\frac{\|\lambda(\varphi^\hp)\|_{L^\infty(\Omega)}}{2C_\beta}
\right)^{1/\beta}\right)$ and 
solving for
$\|\lambda(\varphi^\hp)\|_{L^\infty(\Omega)}$ we obtain
\begin{align}
  \|\lambda(\varphi^\hp)\|_{L^\infty(\Omega)} \leq C(\Omega,n,\beta,K)\hp^{-1}.
  \label{eq:LinfImproved}
\end{align}

\end{proof}

Note that this bound is significantly better than the one shown in Theorem
\ref{thm:Linfty_1} and especially is independent of the  dimension $n$ of
the domain $\Omega$ and of the H\"older exponent $\beta$ of $\varphi^\hp$.

\begin{remark} \label{rm:higherPowersLambda}
In \eqref{eq:CH_s2} we use the violation $\lambda(\varphi^\hp)$ as penalisation
for the outer approximation of the variational inequality \eqref{eq:CH_td2}. 
The function $\lambda$ is Lipschitz continuous, i.e.
$\lambda \in C^{0,1}(\mathbb{R})$.
 We 
introduce smoother penalisations by using higher powers of $\lambda$, i.e. for
$k\geq 2$ we define 
\begin{align*}
  \lambda_k(\varphi^\hp) :=  
  \lambda(\varphi^\hp)|\lambda(\varphi^\hp)|^{k-2}.
\end{align*}
Note that $\lambda_2(\varphi^\hp) \equiv \lambda(\varphi^\hp)$ and that it holds
$\lambda_k \in C^{k-2,1}(B)$ for each bounded interval $B\subset \mathbb{R}$.
Further we have at least $\varphi^\hp \in H^1(\Omega) \hookrightarrow
L^q(\Omega)$ for $q<6$ if $n\equiv 3$ and $q<\infty$ for $n\equiv 2$. Thus these
higher powers of $\lambda$ are still integrable.

Using $\lambda_k$ instead of $\lambda$ in \eqref{eq:CH_s2} leads to the same
results, but  with $\lambda$ substituted by $\lambda_k$. 
Especially we obtain
\begin{align}
  \|\lambda_k(\varphi^\hp)\|_{L^\infty(\Omega)}
   = \|\lambda(\varphi^\hp)^{k-1}\|_{L^\infty(\Omega)}
   \leq C\hp^{-1}
\end{align}
and thus 
\begin{align*}
  \|\lambda(\varphi^\hp)\|_{L^\infty(\Omega)} \leq C\hp^{-\frac{1}{k-1}}
\end{align*}
Such penalisations with smoother functions might be of interest for optimal
control problems, \cite{HintermuellerKopacka,Hintermueller_Keil_Wegner__OPT_CHNS_density}, where sufficient
differentiability properties are required,
 or in the
context of model order reduction using proper orthogonal decomposition,
\cite{Volkwein_POD}. 
\end{remark}

\subsection{The fully discrete case}
\label{ssec:fullyDisc}
We assume, that $\Omega$ is polygonally bounded and thus can be represented
exactly by a finite number of triangles if $n=2$, resp. tetrahedrons if $n=3$.
We introduce a subdivision $\mathcal T$ of $\Omega$ using triangles, resp.
tetrahedrons, and define the space of piecewise linear and globally continuous
finite elements as
\begin{align*}
  V_h = \{v \in C(\bar{\Omega}) \,|\, 
  v|_T \in P^1(T)\,\forall T\in \mathcal{T} \} 
  = \mbox{span}\{\phi_i\,|\,  i=1,\ldots,N\},
\end{align*}
where $P^1(T)$ denotes the space of all polynomials defined on $T$ up to order
1.
Note that functions from $V_h$ are H\"older continuous with exponent $\beta<1$.

The discrete counterparts $\varphi^\hp_h, \mu^\hp_h$ of
$\varphi^\hp,\mu^\hp$ then fulfil the equations
\begin{align*}
    ( \varphi^\hp_h,v) + \tau^k(\nabla \mu^\hp_h,\nabla v) &=
    (\varphi^{k-1},v) && \forall v \in V_h,\\
  \epsilon(\nabla (\varphi^\hp_h,\nabla v) +
  \epsilon^{-1}\Lambda(\varphi^\hp_h,v)  -\epsilon^{-1} (\varphi^{k-1},v)
  &= (\mu^\hp_h,v) && \forall v\in V_h,
\end{align*}
where we investigate three choices for the numerical treatment of the
penalisation term $\Lambda_k(\varphi^\hp_h,v)$.
Using linear elements the term
$\Lambda(\varphi^\hp_h,v) = \Lambda_E(\varphi^\hp_h,v) :=
(\hp\lambda(\varphi^\hp_h),v)$ can be evaluated exactly. Another possibility is
to use the Lagrangian interpolation $I$ for $\lambda(\varphi^\hp_h)$, i.e.
$\Lambda(\varphi^\hp_h,v) = \Lambda_I(\varphi^\hp_h,v) := (\hp
I(\lambda(\varphi^\hp_h)),v)$.
As last variant we investigate an evaluation using lumping, i.e.
$\Lambda(\varphi^\hp_h,v) = \Lambda_L(\varphi^\hp_h,v) := (\hp
\lambda(\varphi^\hp_h),v)^h$, where $(v,w)^h$ is defined as
\begin{align*}
  (v,w)^h := \sum_{i =1}^N (1,\phi_i)v(x_i)w(x_i), 
\end{align*}
and $x_i$ stands for  the vertices of $\mathcal T$.

Note that $|I(\lambda(\varphi^\hp_h))| \geq |\lambda(\varphi^\hp_h)| $ and 
$\|I(\lambda(\varphi^\hp_h))\|_{L^\infty(\Omega)} \equiv \|\lambda(\varphi^\hp_h)\|_{L^\infty(\Omega)}$ due to the linear finite elements
that we use.

Using the different variants of $\Lambda$ the results from Section
\ref{sec:Linfty1} and Section \ref{sec:Linfty2} stay valid for the fully
discrete case, with small changes that we comment on next.

For $\Lambda_E$ no changes are required.
For $\Lambda_I$ and $\Lambda_L$ we obtain  from Theorem \ref{thm:L1bound}
\begin{align*}
\epsilon^{-1}\hp\|\lambda(\varphi^\hp_h)\|_{L^1(\Omega)} \leq 
  \epsilon^{-1}\hp\|I(\lambda(\varphi^\hp_h))\|_{L^1(\Omega)} \leq C.
\end{align*}
The according $L^\infty(\Omega)$ bounds for $\lambda(\varphi^\hp_h)$ in
\eqref{eq:LinfImproved} then follow immediately.

\section{Numerical validation}
\label{sec:numerics}
In this section we perform numerical tests to validate the bounds that we found
in Theorem \ref{thm:Linfty_2}.
 
We perform simulations in two and three space dimensions, using $\Omega =
(0,1)^n$, i.e. the unit square for $n=2$, and the unit cube for $n=3$.

We perform one time step of \eqref{eq:CH_s1}--\eqref{eq:CH_s2} and measure the
violation $\|\lambda(\varphi^\hp)\|_{L^\infty(\Omega)}$. 
The parameter are given as $\epsilon = 0.01$ and $\tau = 0.01$ if $n=2$ and
$\epsilon = 0.04$ and $\tau=0.01$ if $n=3$.
The initial phase field $\varphi_0$
is defined as
\begin{align*}
  \varphi_0(x) &= 
  \begin{cases}
  1 & \mbox{ if } z \geq  \frac{\pi}{2},\\
  -1 & \mbox{ if } z \leq  -\frac{\pi}{2},\\
  \sin( z )& \mbox{ else, }
  \end{cases}
\end{align*}
where $ z =
  \epsilon^{-1}(\|x-m\|-r) $, and $m = (0.5,0.5)$ for $n=2$ and
  $m=(0.5,0.5,0.5)$ if $n=3$. This defines a sphere with radius $r = 0.25$. Note that the sinus is the principle shape of
the phase field across the interface for the double-obstacle free energy
\cite{ElliotStinnerStylesWelford_AdvecDiffEvolvingInterface}.

We adapt the mesh for  the phase field and the chemical potential by using the
reliable and efficient residual based error estimator proposed in
\cite{HintermuellerHinzeTber}. Here we only use the norm of the jumps of the
normal derivatives of $\varphi_h^\hp$ and $\mu^\hp_h$ across edges for $n=2$,
resp. faces for $n=3$, which contains the main part of the indicator
\cite{CarstensenVerfuerth_EdgeResidualDominate}. For the marking of cells we
follow \cite{HintermuellerHinzeTber}, i.e. we the procedure proposed by 
D\"orfer \cite{Doerfler}.

 We perform the usual 
``solve$\to$estimate$\to$mark$\to$refine''
 cycle
three times for each value of $\hp$ and reuse the final mesh as initial mesh for the next larger value of $\hp$.
In Figure \ref{fig:2d:grid} we show a typically mesh obtained by this adaptive
solving procedure.

\begin{figure}
  \centering 
  \includegraphics[width=0.5\textwidth]{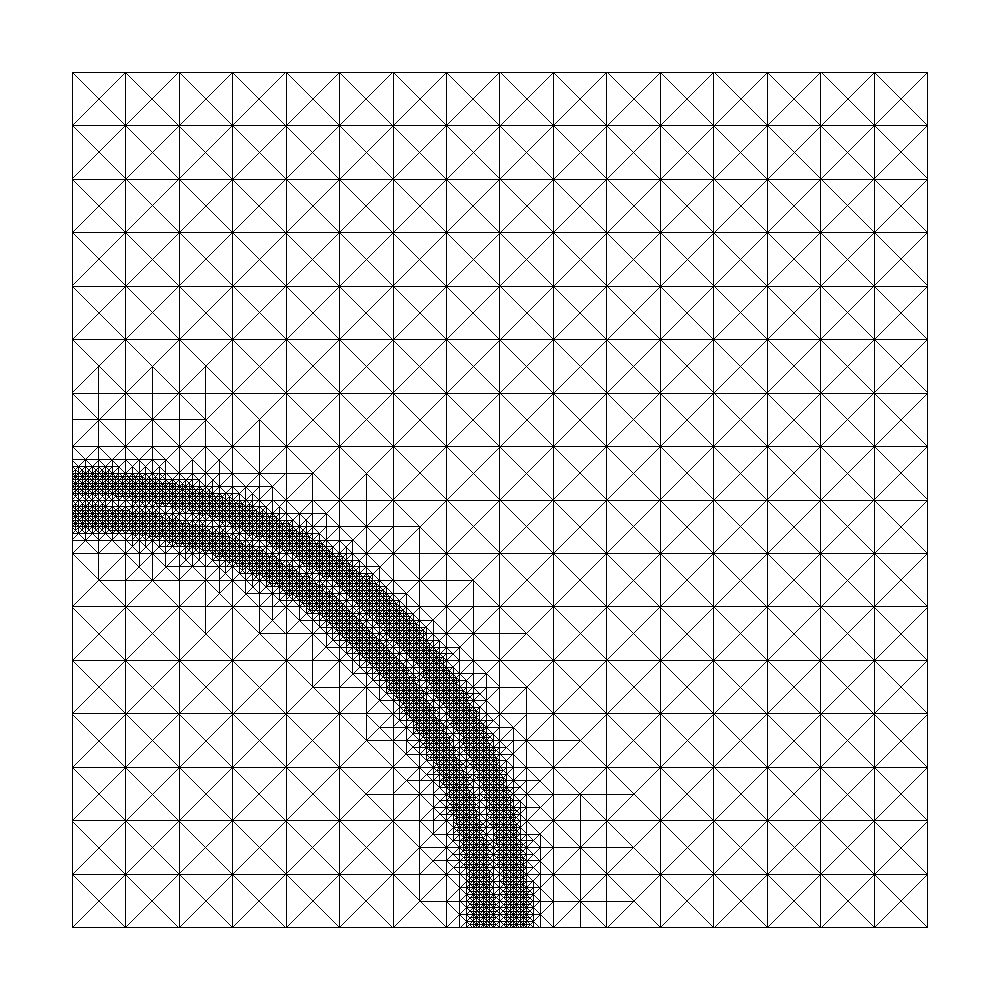}
  \caption{One quarter of the computational domain for the 2d simulation.}
  \label{fig:2d:grid}
\end{figure}

The non linear system \eqref{eq:CH_s1}--\eqref{eq:CH_s2} is solved using
Newton's method and the linear systems are solved directly. The implementation
is done in C++ using the finite element toolbox FEniCS \cite{fenics_book} with the PETSc
\cite{petsc_webpage} linear algebra back end and the MUMPS \cite{mumps} direct
solver.

\subsection*{Experiments in two space dimensions}
In two space dimensions we investigate the three cases of numerical treatment of
the penalisation $\lambda(\varphi^\hp)$ as proposed in Section
\ref{ssec:fullyDisc}, namely the exact integration, the interpolation case and
the lumping case.

In Figure \ref{fig:2d:lambda2} we present numerical results. We show the
violation $\|\lambda(\varphi^\hp)\|_{L^\infty(\Omega)}$ for the different
numerical treatments of terms involving $\lambda$.

\begin{figure}
  \centering
  \includegraphics[width=0.45\textwidth]{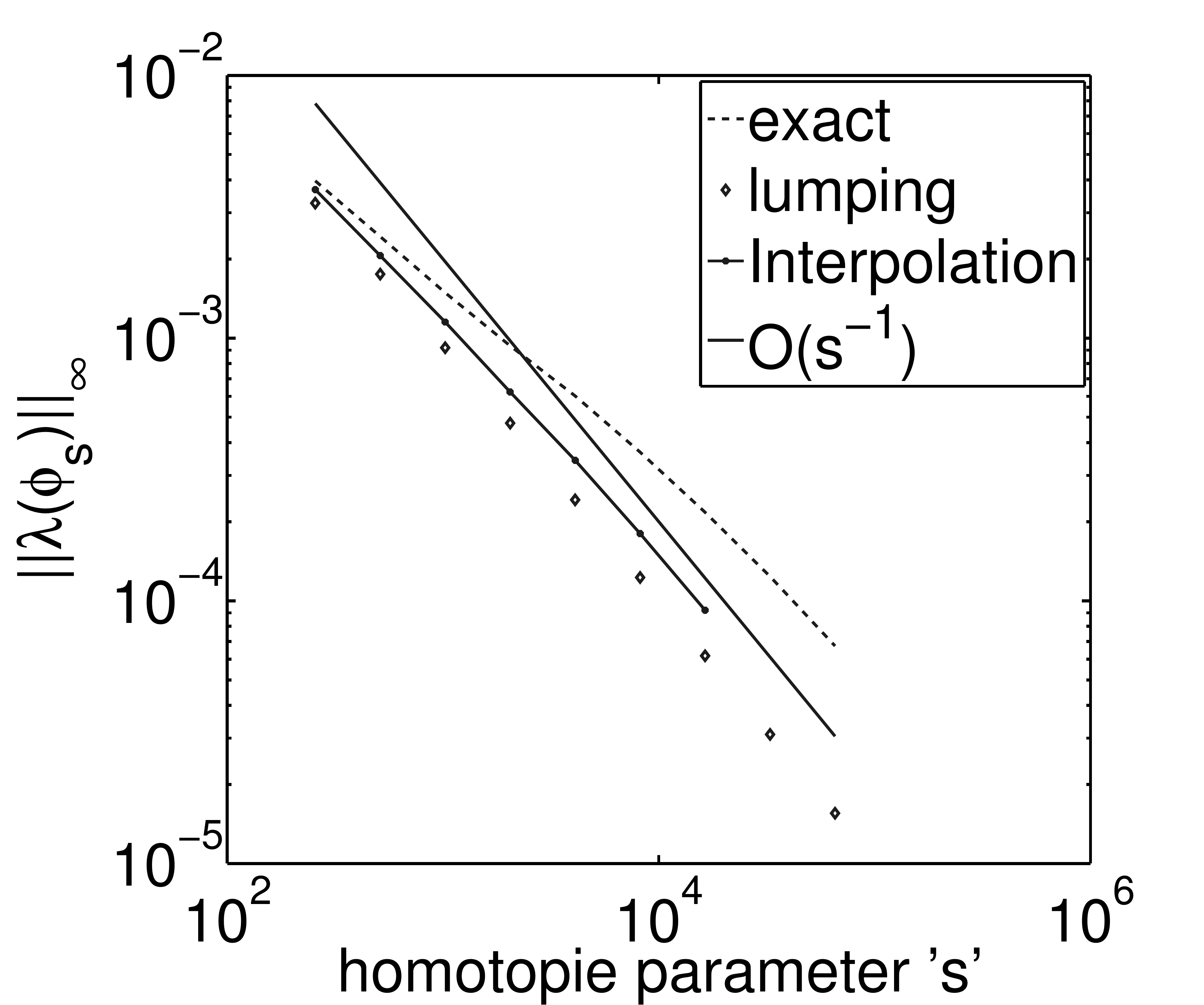}
  \caption{The violation of the bound $|\varphi|\leq 1$ for penalisation with
  $\lambda_2$, and the different numerical
  treatments, i.e. exact evaluation, lumping and interpolation.
  Note that  Newton's method failed in calculating the solution
  for larger values of $\hp$ if interpolation is used.
  }
  \label{fig:2d:lambda2}
\end{figure}

In the case of interpolation we see that  Newton's method is not successful in
finding the solution to  \eqref{eq:CH_s1}--\eqref{eq:CH_s2} for large values of
$\hp$,
while the exact evaluation and also the lumping evaluation converge equally
well. In the case of exact integration we have a slightly higher violation of
the constraints.
However in all three cases the theoretical bound from Theorem
\ref{thm:Linfty_2} holds.

\begin{figure}
    \hfill
  \includegraphics[width=0.45\textwidth]{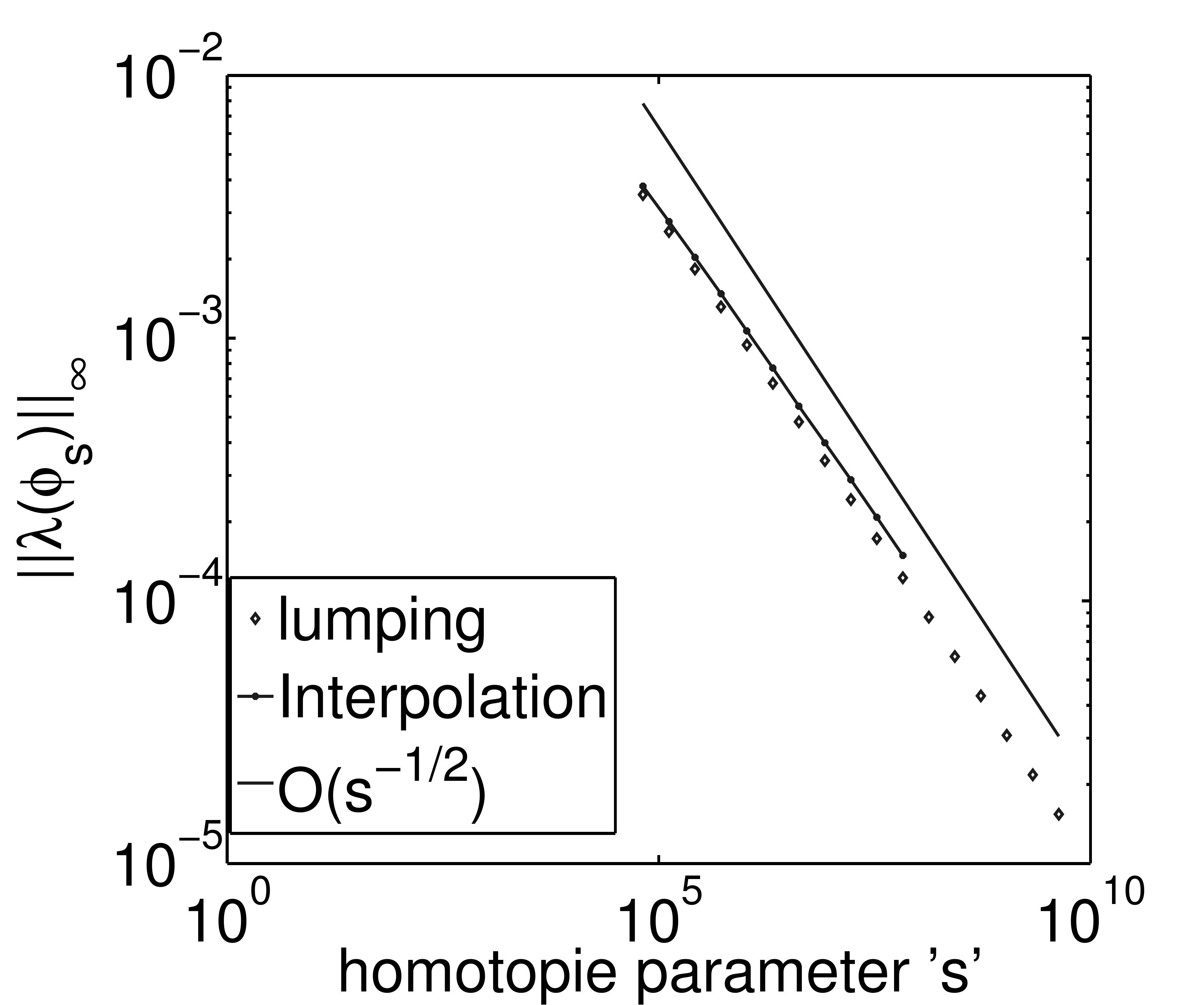}
  \hfill
  \includegraphics[width=0.45\textwidth]{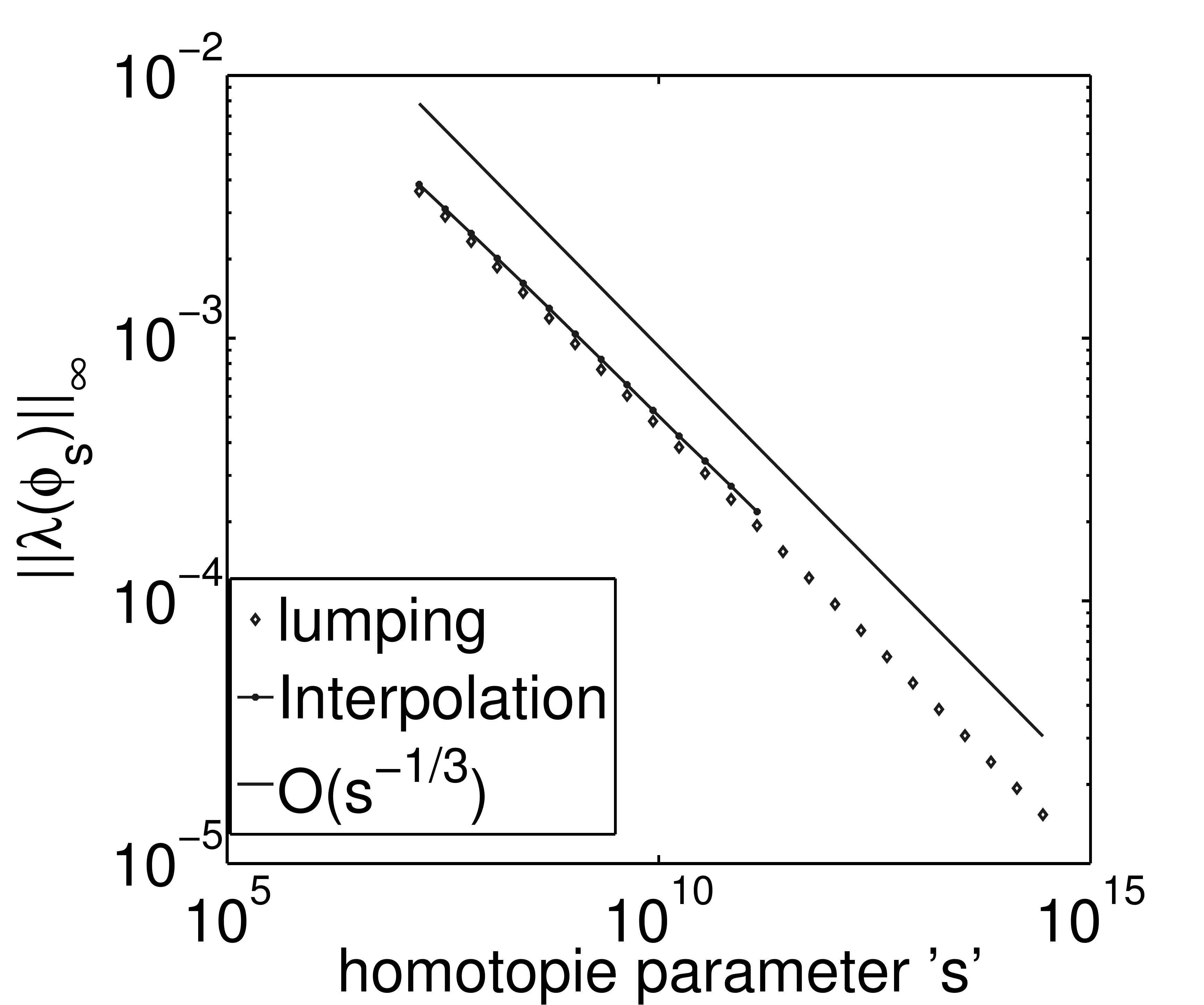}
  \hfill 
  \caption{The violation of the bound $|\varphi|\leq 1$ for penalisation with
  $\lambda_k$, $k=3,4$ (left to right) and the different numerical
  treatments, i.e. lumping and interpolation.
  Note that in both cases, Newton's method fails in calculating the solution
  for larger values of $\hp$ if interpolation is used.
  }
  \label{fig:2d:lambda34}
\end{figure}

As next test we investigate $\|\lambda(\varphi^\hp)\|_{L^\infty(\Omega)}$ when
$\lambda_k$ for $k>2$ is used for penalisation as proposed in Remark
\ref{rm:higherPowersLambda}. Here we do not perform exact integration.

Also in the case of higher powers of $\lambda$ 
 the theoretical bounds are attained. Again we
observe that Newton's method fails in finding the solution for large values of
$\hp$ if interpolation is used, while with lumped evaluation the solution is
always found. Note that to obtain comparable violations for $\lambda_k$ we need
larger values of $\hp$.

\subsection*{Experiments in three space dimensions}
In three dimensions we only investigate lumping and interpolation as treatment
of the term with $\lambda$ and again we use $\lambda_k$ for $k=2,3,4$.

In Figure \ref{fig:3d:lambda234} we show the numerical results. Again we obtain
the expected bounds. Also using interpolation Newton's method fails in finding
the unique solution for larger values of $\hp$.

\begin{figure}
    \hfill 
  \includegraphics[width=0.32\textwidth]{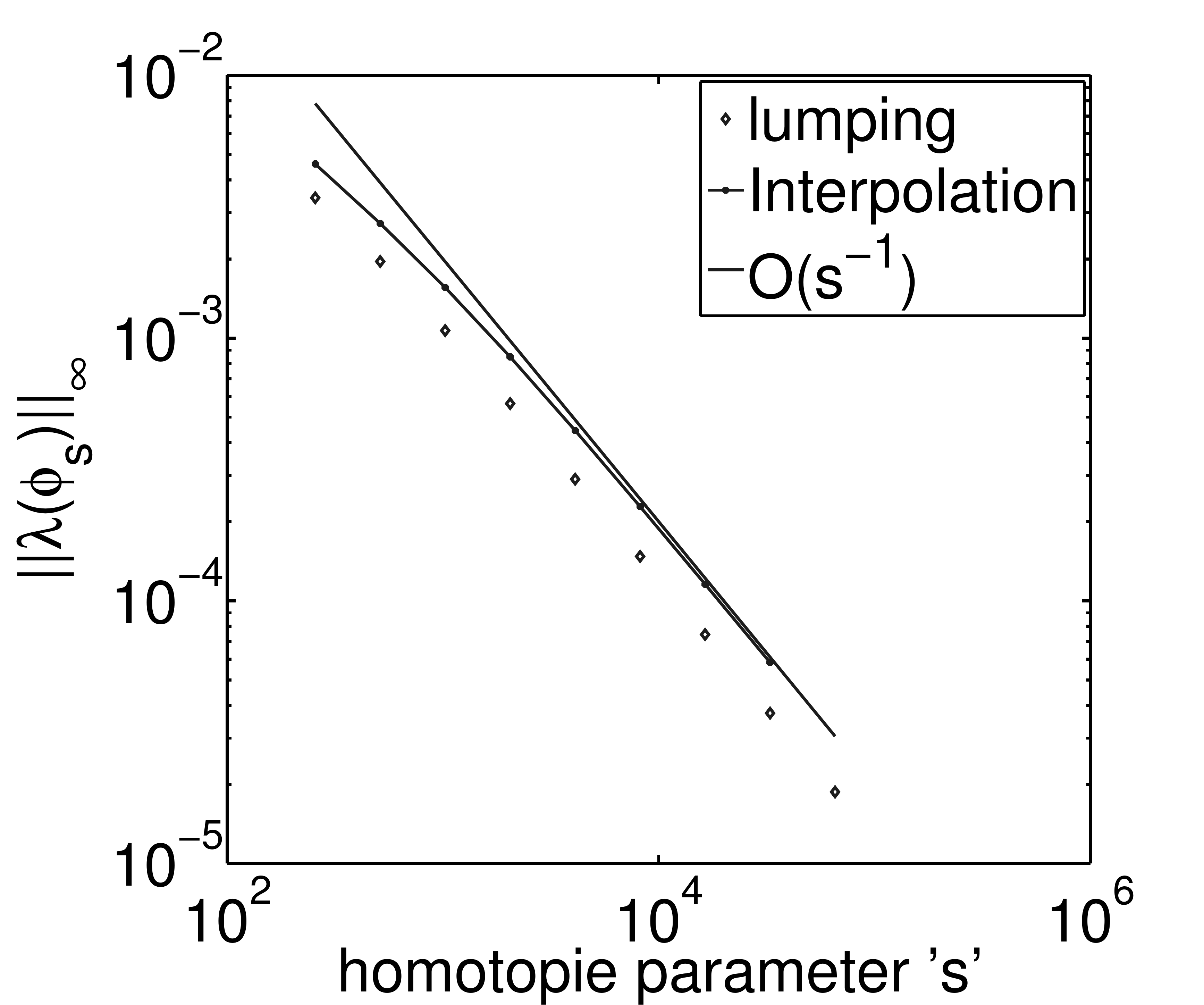}
    \hfill
   \includegraphics[width=0.32\textwidth]{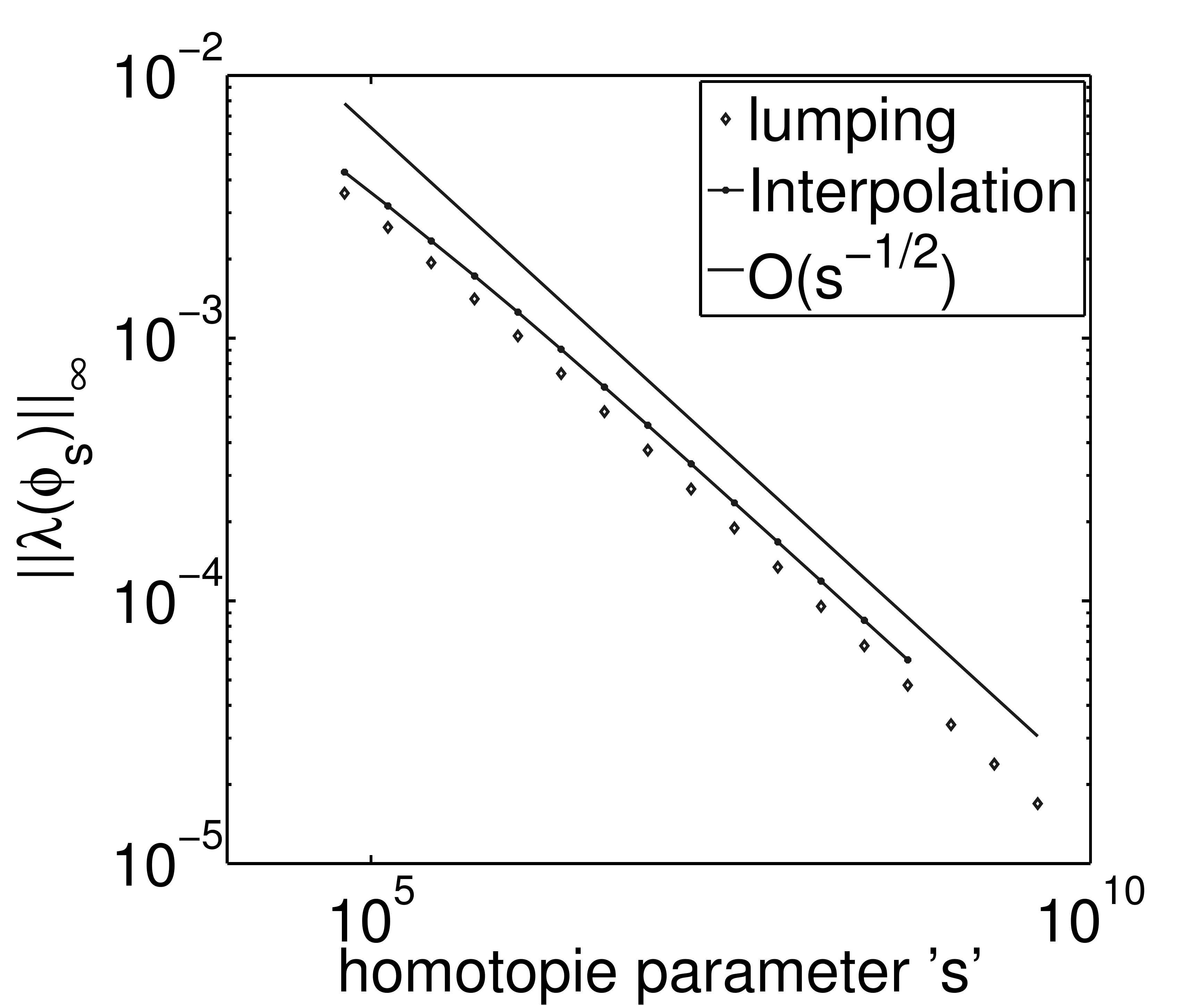}
     \hfill
        \includegraphics[width=0.32\textwidth]{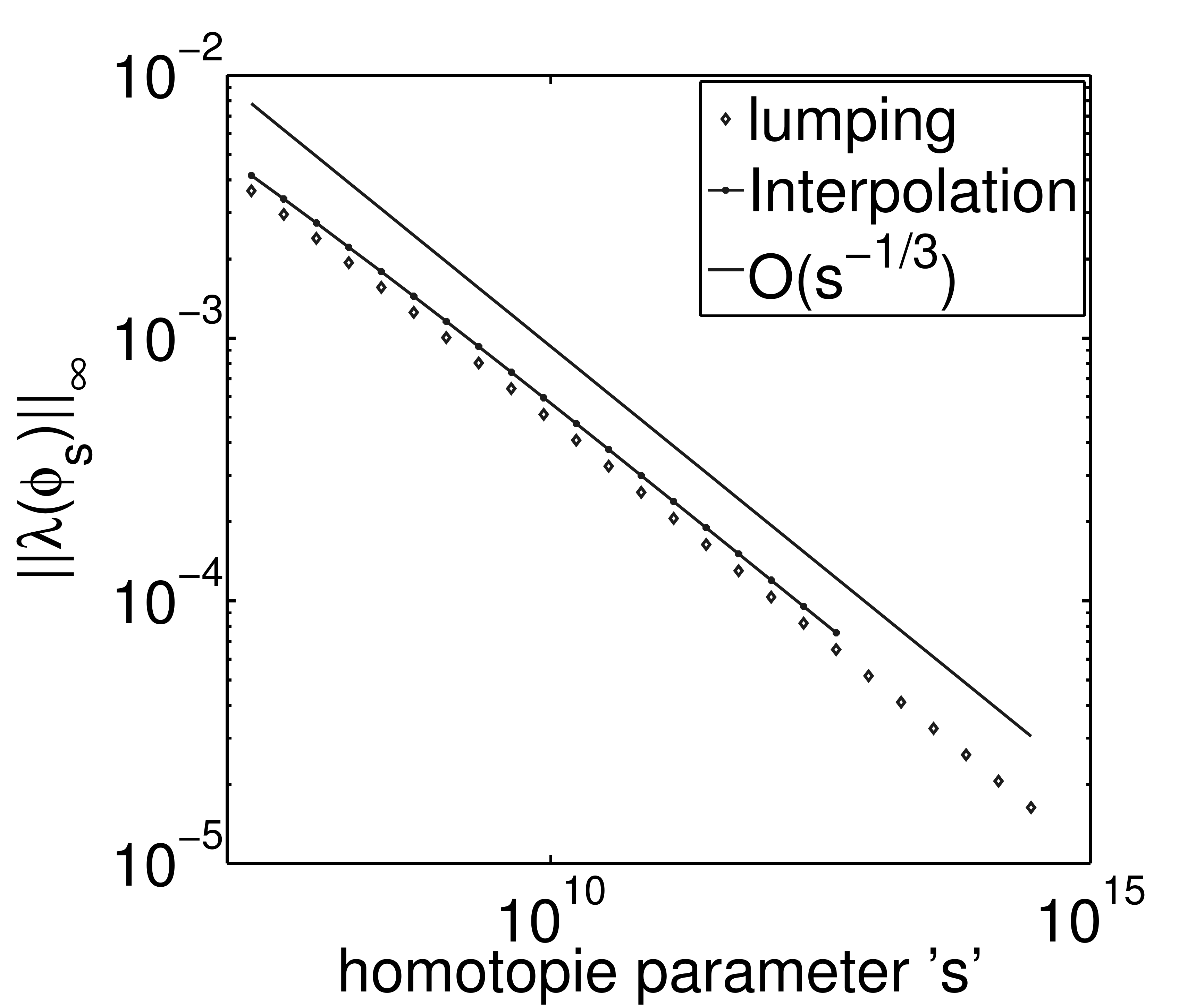}
     \hfill
  \caption{Numerical results for three space dimensions and with $\lambda_k$,
  $k=2,3,4$ (left to right). The proposed convergence rate is obtained and
  again in  the case of interpolation of $\lambda$ Newton's method fails in
  finding the solution for large values of $\hp$.  }
  \label{fig:3d:lambda234}
\end{figure}

\newcommand{\etalchar}[1]{$^{#1}$}

\end{document}